\newtheorem{thm}{Theorem}[section]
\newtheorem{lem}[thm]{Lemma}
\newtheorem{prob}[thm]{Problem}
\theoremstyle{definition}
\newtheorem{rem}[thm]{Remark}
\numberwithin{equation}{section}
\begin{document}

\title{\Large  The restricted sumsets in $\mathbb{Z}_n$}
\author{\large Min Tang\thanks{Corresponding author. This work was supported by
National Natural Science Foundation of China, Grant No. 11471017.
E-mail: tmzzz2000@163.com} and Meng-Ting Wei}
\date{} \maketitle
 \vskip -3cm
\begin{center}
\vskip -1cm { \small
\begin{center}
School of Mathematics and Statistics, Anhui Normal
University
\end{center}
\begin{center}
Wuhu 241002, PR China
\end{center}}
\end{center}

 {\bf Abstract:} Let $h\geq 2$ be a positive integer. For any subset $\mathcal{A}\subset \mathbb{Z}_n$, let $h^{\wedge}\mathcal{A}$ be the set of the elements of $\mathbb{Z}_n$ which are sums of $h$ distinct elements of $\mathcal{A}$.
 In this paper, we obtain some new results on $4^{\wedge}\mathcal{A}$ and $5^{\wedge}\mathcal{A}$. For example, we show that if $|\mathcal{A}|\geq 0.4045n$ and $n$ is odd, then $4^{\wedge}\mathcal{A}=\mathbb{Z}_{n}$;
Under some conditions, if $n$ is even and $|\mathcal{A}|$ is close to $n/4$, then $4^{\wedge}\mathcal{A}=\mathbb{Z}_{n}$.

{\bf Keywords:} restricted sumsets; representation

2010 Mathematics Subject Classification: 11B13\vskip8mm

\section{Introduction} Let $p$ be an odd prime and $h\geq 2$ be a positive integer. For any subset $\mathcal{A}\subset \mathbb{Z}_n$, let $h^{\wedge}\mathcal{A}$ be the set of the elements of $\mathbb{Z}_n$ which are sums of $h$ distinct elements of $\mathcal{A}$.
Up to now, we know quite few on restricted sums in $\mathbb{Z}_n$. In 1963, Erd\H{o}s-Heilbronn first posed the following conjecture at a number theory conference:

{\bf Erd\H{o}s-Heilbronn Conjecture} If $\mathcal{A}\subseteq \mathbb{Z}_p$ and $|\mathcal{A}|=k$, then $$|h^{\wedge}A|\geq \min\{p, hk-h^{2}+1\}.$$

In 1994, Dias da Silve and Hamidoune \cite{Dias} proved this conjecture by using the knowledge of representation theory and linear algebra. Let $p$ be the characteristic of field $F$. They proved the following result:

\noindent{\bf Theorem A}(\cite{Dias}, Theorem 4.1) Let $\mathcal{A}$ be a finite subset of a field $F$ and $m$ be a positive integer. Then
$$|m^{\wedge}\mathcal{A}|\geq \min\{p, m|\mathcal{A}|-m^2+1\}.$$

Let $\mathcal{A}\subseteq \mathbb{Z}_n$. Theorem A implies the following fact: If $n$ is prime and $|\mathcal{A}|>(n+8)/3$, then $3^{\wedge}\mathcal{A}=\mathbb{Z}_n$.

In 1995, Alon, Nathanson and Ruzsa \cite{Alon1995} proved Erd\H{o}s-Heilbronn conjecture for $h=2$.
In 1996, by general algebraic technique, Alon, Nathanson and Ruzsa \cite{Alon1996} proved this conjecture for all $h\geq 2$. In 1999, Alon \cite{Alon1999} restated this result by polynomial method.

In 1999, Gallardo, Grekos and Pihko \cite{LGJ} obtained the following result:

\noindent{\bf Theorem B}(\cite{LGJ}, Lemma 3) Let $\mathcal{A}\subseteq \mathbb{Z}_n$ such that $|\mathcal{A}|> n/2+1$, then $2^{\wedge}A=\mathbb{Z}_n$.

In 2002, Gallardo, Grekos, Habsieger, Hennecart, Landreau and Plagne \cite{Gallardo02} gave some results on restricted sums $2^{\wedge}A$ and $3^{\wedge}A$.

\noindent{\bf Theorem C} (\cite{Gallardo02}, Theorem 2.9) Let $\mathcal{A}$
be a subset of $\mathbb{Z}_{n}$ such that $\displaystyle|\mathcal{A}|=\left\lfloor\frac{n}{2}\right\rfloor+1$ and $|2^{\wedge}\mathcal{A}|=n-2$. We denote by $2a,2b\in2\mathcal{A}\setminus2^{\wedge}\mathcal{A}$, and
by $H$ the subgroup generated by $2(b-a)$, whose order is $d>1$.

Then
$d$ is odd and there exists a sequence $\varepsilon_{1},\varepsilon_{2},\cdots,\varepsilon_{m}\in\{\pm1\}$ such that
\begin{equation}\label{eq1000}\mathcal{A}=a+\bigcup\limits_{j=1}^{m}(H+j\varepsilon_{j})\cup\mathcal{B},\;m=\left\lfloor\frac{\frac{n}{d}-1}{2}\right\rfloor,\end{equation}
where
\begin{displaymath}
\mathcal{B}=
\begin{cases}
\{b,3b,5b,\cdots,-2b,0=b+\frac{d-1}{2}\cdot 2b\}, & \text{if }n\; \text{is odd},\\
\{b,3b,5b,\cdots,db\}\cup\{0,-2b,-4b,\cdots,-(d-1)b\},\quad& \text{if }n\; \text{is even}.
\end{cases}
\end{displaymath}

\noindent{\bf Theorem D} (\cite{Gallardo02}, Theorem 3.1) For any integer $n\geq 12$, except $n=15,$ and for any subset $\mathcal{A}$ of $\subset\mathbb{Z}_{n}$ such that $|\mathcal{A}|>n/2$, one has $3^{\wedge}\mathcal{A}=\mathbb{Z}_{n}$.

For other related problems, see (\cite{Dierker}, \cite{Lev01}, \cite{Lev02}, \cite{Gallardo}).

In this paper, we obtain the following results:

\begin{thm} {\label{thm2}}For any even positive integer $n\geq 6$, if $\mathcal{A}\subset\mathbb{Z}_{n}$ such that $|\mathcal{A}|\geq\displaystyle\frac{n}{2}+3$, then we have
$4^{\wedge}\mathcal{A}=5^{\wedge}\mathcal{A}=\mathbb{Z}_{n}.$
\end{thm}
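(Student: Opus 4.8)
The plan is to reduce both equalities to the already-quoted sumset results (Theorem~B for $2^{\wedge}$ and Theorem~D for $3^{\wedge}$) by deleting a couple of elements of $\mathcal{A}$, and then to dispose of the few moduli that are too small for Theorem~D by a direct computation.

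First I would settle all even $n\geq 12$. For $4^{\wedge}\mathcal{A}$, fix any $a\in\mathcal{A}$ and put $\mathcal{A}'=\mathcal{A}\setminus\{a\}$; then $|\mathcal{A}'|=|\mathcal{A}|-1\geq \frac n2+2>\frac n2$, and since $n$ is even we have $n\neq 15$, so Theorem~D gives $3^{\wedge}\mathcal{A}'=\mathbb{Z}_n$. Hence for each $g\in\mathbb{Z}_n$ the element $g-a$ is a sum of three distinct elements of $\mathcal{A}\setminus\{a\}$, and adding $a$ exhibits $g$ as a sum of four distinct elements of $\mathcal{A}$. For $5^{\wedge}\mathcal{A}$ I would instead remove two distinct elements $a,b\in\mathcal{A}$: now $\mathcal{A}''=\mathcal{A}\setminus\{a,b\}$ has $|\mathcal{A}''|=|\mathcal{A}|-2\geq \frac n2+1>\frac n2$, so again $3^{\wedge}\mathcal{A}''=\mathbb{Z}_n$ by Theorem~D, and writing $g-a-b$ as a sum of three distinct elements of $\mathcal{A}\setminus\{a,b\}$ gives $g\in 5^{\wedge}\mathcal{A}$. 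The hypothesis $|\mathcal{A}|\geq \frac n2+3$ is used exactly here, to keep $|\mathcal{A}''|$ strictly above $n/2$.

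It then remains to treat $n\in\{6,8,10\}$, where $|\mathcal{A}|\geq \frac n2+3$ forces $\mathcal{A}$ to be $\mathbb{Z}_n$ minus a set of at most $\frac n2-3$ elements (nothing removed when $n=6$, at most one point when $n=8$, at most two when $n=10$). Since $h^{\wedge}$ is monotone under inclusion, I would only check the extreme case $|\mathcal{A}|=\frac n2+3$. I would lean on two tools. The first is the complementation identity
\[
h^{\wedge}\mathcal{A}=\Big(\textstyle\sum_{x\in\mathcal{A}}x\Big)-(|\mathcal{A}|-h)^{\wedge}\mathcal{A},
\]
obtained by passing from a chosen $h$-subset of $\mathcal{A}$ to its complement; it turns $5^{\wedge}$ into $1^{\wedge}\mathcal{A}=\mathcal{A}$ when $n=6$, turns $5^{\wedge}$ into $2^{\wedge}\mathcal{A}$ when $n=8$, and turns $4^{\wedge}$ into $2^{\wedge}\mathcal{A}$ when $n=6$. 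The second is Theorem~B: in each of these cases $|\mathcal{A}|>\frac n2+1$, so $2^{\wedge}\mathcal{A}=\mathbb{Z}_n$, which closes those reductions. The only instances left are $4^{\wedge}\mathcal{A}$ for $n=8$ and both $4^{\wedge}\mathcal{A}$ and $5^{\wedge}\mathcal{A}$ for $n=10$; here Theorem~D does not apply, so I would verify them by hand after using the affine change of variables $x\mapsto ux+v$ with $u\in\mathbb{Z}_n^{\times}$ — which replaces $h^{\wedge}\mathcal{A}$ by $u\cdot h^{\wedge}\mathcal{A}+hv$ and hence preserves the property ``$h^{\wedge}\mathcal{A}=\mathbb{Z}_n$'' — to reduce to the representatives $\mathcal{A}=\{1,\dots,7\}\subset\mathbb{Z}_8$ and $\mathcal{A}=\mathbb{Z}_{10}\setminus\mathcal{C}$ with $\mathcal{C}\in\{\{0,1\},\{0,2\},\{0,5\}\}$, for each of which a short enumeration of the relevant sums suffices.

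The main obstacle is precisely this last batch of small-modulus verifications, above all $n=10$: since Theorem~D is available only for $n\geq 12$, one cannot outsource the $3^{\wedge}$ or $4^{\wedge}$ computations there and must instead argue directly that each of the three affine types of dense subsets of $\mathbb{Z}_{10}$ already has full fourfold and fivefold restricted sumset. Everything else is an immediate consequence of Theorems~B and~D.
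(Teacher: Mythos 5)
Your proposal is correct, but it proves the theorem by a genuinely different route than the paper. The paper never invokes Theorem~D: it splits $\mathcal{A}$ by parity into $\mathcal{A}_e$ and $\mathcal{A}_o$ (each of size at least $3$ since $|\mathcal{A}|\geq n/2+3$), and for each target $x$ runs a pigeonhole argument inside the index-two subgroup $E$ of even residues, using only the elementary bounds $|2^{\wedge}\mathcal{B}|\geq|\mathcal{B}|$ (Lemma~2.1) and $|3^{\wedge}\mathcal{B}|\geq|\mathcal{B}|-2$ to force $2^{\wedge}\mathcal{A}_e\cap(x-2^{\wedge}\mathcal{A}_o)\neq\emptyset$ and its analogues; this treats all even $n\geq 6$ uniformly with no case analysis. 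You instead outsource the combinatorial core to the (much deeper) Theorem~D via the deletion trick $h^{\wedge}\mathcal{A}\supseteq a+(h-1)^{\wedge}(\mathcal{A}\setminus\{a\})$, which disposes of all even $n\geq 12$ in two lines, and then pay for it with finite verifications at $n\in\{6,8,10\}$; your reductions there (monotonicity, the complementation identity $h^{\wedge}\mathcal{A}=\sigma-(|\mathcal{A}|-h)^{\wedge}\mathcal{A}$, Theorem~B, and the affine normalization to $\{1,\dots,7\}\subset\mathbb{Z}_8$ and the three orbit representatives $\{0,1\},\{0,2\},\{0,5\}$ in $\mathbb{Z}_{10}$) are all sound, and the remaining enumerations do in fact succeed. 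The trade-off: the paper's argument is self-contained and elementary, whereas yours is shorter modulo cited results and incidentally yields a slight strengthening — for even $n\geq 12$ the weaker hypothesis $|\mathcal{A}|\geq n/2+2$ already gives $4^{\wedge}\mathcal{A}=\mathbb{Z}_n$, with $n/2+3$ needed only for $5^{\wedge}\mathcal{A}$.
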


\begin{thm} {\label{thm3}} Let $E$ and $O$ be the even and the odd elements of $\mathbb{Z}_n$, respectively. Write
$\mathcal{A}_{e}=E\cap\mathcal{A}$, $\mathcal{A}_{o}=O\cap\mathcal{A}.$ For any even positive integer $n$, if $\mathcal{A}\subset\mathbb{Z}_{n}$ such that $|\mathcal{A}_e|\geq\displaystyle\frac{n}{4}+3$ and $|\mathcal{A}_o|\geq2$, then we have
$4^{\wedge}\mathcal{A}=\mathbb{Z}_{n}.$
\end{thm}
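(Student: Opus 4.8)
The plan is to exploit the parity structure of $\mathbb{Z}_n$. Since $n$ is even, the set $E$ of even residues is a subgroup, and the map sending $2k\bmod n$ to $k\bmod (n/2)$ is an isomorphism $E\cong\mathbb{Z}_{n/2}$ under which a sum of two \emph{distinct} elements of $E$ corresponds to a sum of two distinct elements of $\mathbb{Z}_{n/2}$. First I would dispose of the small cases: the hypothesis $|\mathcal{A}_e|\ge n/4+3$ can only be satisfied when $n/4+3\le|E|=n/2$, i.e. $n\ge 12$, so for $n\le 10$ the statement is vacuous and we may assume $n\ge 12$, hence $n/2\ge 6$.

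Next I would prove the two facts $2^{\wedge}\mathcal{A}_e=E$ and $3^{\wedge}\mathcal{A}_e=E$. For the first, transport $\mathcal{A}_e$ to a subset $\mathcal{A}_e'\subseteq\mathbb{Z}_{n/2}$ via the isomorphism above; since $|\mathcal{A}_e'|=|\mathcal{A}_e|\ge n/4+3>(n/2)/2+1$, Theorem B gives $2^{\wedge}\mathcal{A}_e'=\mathbb{Z}_{n/2}$, which pulls back to $2^{\wedge}\mathcal{A}_e=E$. For the second, fix any $c\in\mathcal{A}_e$; then $|\mathcal{A}_e\setminus\{c\}|\ge n/4+2>(n/2)/2+1$, so the same argument gives $2^{\wedge}(\mathcal{A}_e\setminus\{c\})=E$, and because the three summands $c,b_1,b_2$ with $b_1\ne b_2$ in $\mathcal{A}_e\setminus\{c\}$ are automatically pairwise distinct, $3^{\wedge}\mathcal{A}_e\supseteq c+2^{\wedge}(\mathcal{A}_e\setminus\{c\})=c+E=E$, whence $3^{\wedge}\mathcal{A}_e=E$.

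Finally I would assemble $4^{\wedge}\mathcal{A}$, using the hypothesis $|\mathcal{A}_o|\ge 2$ to supply one or two odd summands. Choose distinct $o_1,o_2\in\mathcal{A}_o$. Given an even target $e$, the element $e-o_1-o_2$ is even, so by the first fact $e-o_1-o_2=a_1+a_2$ with $a_1\ne a_2$ in $\mathcal{A}_e$, and then $e=a_1+a_2+o_1+o_2$ is a sum of four \emph{distinct} elements of $\mathcal{A}$ (two even, two odd), so $E\subseteq 4^{\wedge}\mathcal{A}$. Given an odd target $f$, the element $f-o_1$ is even, so by the second fact $f-o_1=a_1+a_2+a_3$ with $a_1,a_2,a_3$ distinct in $\mathcal{A}_e$, and $f=a_1+a_2+a_3+o_1\in 4^{\wedge}\mathcal{A}$, so $O\subseteq 4^{\wedge}\mathcal{A}$. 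Hence $4^{\wedge}\mathcal{A}=E\cup O=\mathbb{Z}_n$.

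This is essentially a clean reduction to Theorem B, so there is no serious obstacle; the delicate points are merely bookkeeping: checking that $n/4+3$ still beats the Theorem B threshold $(n/2)/2+1$ after one element of $\mathcal{A}_e$ is deleted (it does, with slack), verifying the small cases are vacuous so that Theorem B is always applied in range, and observing that distinctness of the four summands is automatic since summands of opposite parity cannot coincide. One could instead invoke Theorem D to obtain $3^{\wedge}\mathcal{A}_e=E$ for $n\ge 24$, $n\ne 30$, but the delete-a-point trick is uniform and keeps the proof self-contained. If one wanted to lower the additive constant $3$, the near-extremal configurations described in Theorem C would have to be analysed directly, which is where the real difficulty would lie.
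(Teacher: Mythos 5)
Your argument is correct and follows essentially the same route as the paper: decompose by parity, show $2^{\wedge}\mathcal{A}_{e}=E$ and (by deleting one point) $3^{\wedge}\mathcal{A}_{e}=E$, then absorb two or one odd summands to cover the even and odd targets respectively. The only difference is that you obtain $2^{\wedge}\mathcal{A}_{e}=E$ by transporting to $\mathbb{Z}_{n/2}$ and quoting Theorem B, whereas the paper applies Lemma \ref{lem5} directly to the subgroup $E$ using the doubling-constant computation of Lemma \ref{lem6}; the thresholds coincide, so this is a cosmetic substitution rather than a different proof.
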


\begin{rem} By Theorem A we know that if $n$ is prime and $|\mathcal{A}|>(n+15)/4$, then $4^{\wedge}\mathcal{A}=\mathbb{Z}_n$. Theorem \ref{thm3} shows that under some conditions, if $|\mathcal{A}|\geq \displaystyle\frac{n+20}{4}$, then for even integer $n$ we have $4^{\wedge}\mathcal{A}=\mathbb{Z}_{n}$.
\end{rem}
\begin{thm} {\label{thm1}}For any $\alpha>\alpha_{0}=\displaystyle\sqrt[3]{\frac{27+\sqrt{741}}{486}}+\sqrt[3]{\frac{27-\sqrt{741}}{486}}$, there exists $N=N(\alpha)=\displaystyle\frac{54}{9\alpha^{3}+\alpha-1}$ such that for all
 $n>N$ and $n$ is odd,
 $\mathcal{A}\subset\mathbb{Z}_{n}$, if $|\mathcal{A}|\geq\alpha n$, then we have $4^{\wedge}\mathcal{A}=\mathbb{Z}_{n}$.
\end{thm}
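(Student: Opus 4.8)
The plan is to fix an arbitrary $t\in\mathbb{Z}_{n}$ and prove that $t\in 4^{\wedge}\mathcal{A}$, arguing by contradiction from the assumption that no four distinct elements of $\mathcal{A}$ sum to $t$. The starting point is a rigidity remark: if $a_{1},a_{2},a_{3}$ are distinct elements of $\mathcal{A}$ and $a_{4}:=t-a_{1}-a_{2}-a_{3}$ lies in $\mathcal{A}$, then necessarily $a_{4}\in\{a_{1},a_{2},a_{3}\}$, for otherwise $a_{1},a_{2},a_{3},a_{4}$ would be four distinct elements of $\mathcal{A}$ with sum $t$. Equivalently, for the representation function $r(x)=\#\{(a,b)\in\mathcal{A}^{2}:a+b=x\}$: given two distinct $a,b\in\mathcal{A}$, every representation $c+d=t-a-b$ with $c,d\in\mathcal{A}$ must use $a$ or $b$, so $r$ is bounded by a small absolute constant on the set $t-2^{\wedge}\mathcal{A}$, which has $|2^{\wedge}\mathcal{A}|$ elements; in fact three or more such representations are impossible, and this refinement is what one needs in order to reach the exact constant below.

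The second step is to peel off one summand at a time. Writing $4^{\wedge}\mathcal{A}=\bigcup_{a\in\mathcal{A}}\bigl(a+3^{\wedge}(\mathcal{A}\setminus\{a\})\bigr)$ and iterating this for $3^{\wedge}$ and $2^{\wedge}$, the rigidity remark applies to every $3$-subset $S=\{a_{1},a_{2},a_{3}\}$ of $\mathcal{A}$: the element $t-(a_{1}+a_{2}+a_{3})$ either fails to lie in $\mathcal{A}$, or it lies in $S$ itself. The $3$-subsets of the first kind are counted by $\sum_{v}\tau(v)$, where $v$ ranges over the $n-|\mathcal{A}|$ values in $t-(\mathbb{Z}_{n}\setminus\mathcal{A})$ and $\tau(v)$ is the number of $3$-subsets of $\mathcal{A}$ with sum $v$ (so one needs an upper bound for $\tau$); the $3$-subsets of the second kind have a doubled coordinate, $t=2a_{i}+a_{j}+a_{k}$, and — using that $n$ is odd, so that $x\mapsto 2x$ is a bijection — their number is at most $\tfrac12\sum_{a\in\mathcal{A}}r(t-2a)$, which is small by the boundedness of $r$ on $t-2^{\wedge}\mathcal{A}$. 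Since the total number of $3$-subsets is $\binom{|\mathcal{A}|}{3}$, putting the estimates together leads, after simplification, to an inequality of the form
\[
9\,|\mathcal{A}|^{3}+|\mathcal{A}|\,n^{2}\;\le\;n^{3}+54\,n^{2}.
\]
Dividing by $n^{3}$ and using $|\mathcal{A}|\ge\alpha n$ with $\alpha>\alpha_{0}$, this forces $9\alpha^{3}+\alpha-1\le 54/n$, i.e.\ $n\le 54/(9\alpha^{3}+\alpha-1)=N(\alpha)$, contradicting $n>N(\alpha)$; note that $\alpha_{0}$ is exactly the real root of $9\alpha^{3}+\alpha-1=0$, which Cardano's formula gives in the form stated.

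The step I expect to be the real obstacle is making the constants in the above estimates sharp enough to produce exactly $N(\alpha)=54/(9\alpha^{3}+\alpha-1)$, rather than a cruder threshold. This requires bounding $|2^{\wedge}\mathcal{A}|$, and the auxiliary representation sums, from below over $\mathbb{Z}_{n}$, where the Erd\H{o}s--Heilbronn bound $|2^{\wedge}\mathcal{A}|\ge 2|\mathcal{A}|-3$ — valid over a prime field — can fail. Here the hypotheses that $n$ is odd and $|\mathcal{A}|\ge\alpha n$ enter: such an $\mathcal{A}$ cannot lie in a coset of a proper subgroup, and the only configurations for which $|2^{\wedge}\mathcal{A}|$ is genuinely small are those in which $\mathcal{A}$ is, up to a bounded number of elements, a union of cosets of a subgroup of small odd index; but in that case Cauchy--Davenport applied in the quotient group already yields $2^{\wedge}\mathcal{A}=\mathbb{Z}_{n}$, hence $4^{\wedge}\mathcal{A}=\mathbb{Z}_{n}$ trivially. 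Once these structured sets are dealt with separately, the counting argument applies to all remaining $\mathcal{A}$, and the proof is complete for every $n>N(\alpha)$.
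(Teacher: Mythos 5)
Your combinatorial skeleton --- fix $t\notin 4^{\wedge}\mathcal{A}$, observe that every $3$-subset $S$ of $\mathcal{A}$ with $t-\sigma(S)\in\mathcal{A}$ must in fact satisfy $t-\sigma(S)\in S$, and count --- is sound, and it is essentially a repackaging of the paper's Lemma \ref{lem2}, which separates the $24\cdot(\text{distinct representations})$ from the degenerate patterns $2a+b+c$, $2a+2b$, $3a+b$, $4a$ by inclusion--exclusion. But the quantitative heart of the proof is missing. Your inequality requires a nontrivial upper bound for $\sum_{w\notin\mathcal{A}}\tau(t-w)$, equivalently a lower bound for $\sum_{a\in\mathcal{A}}\tau(t-a)$, i.e.\ for the number of genuine representations of $t$; you flag that ``one needs an upper bound for $\tau$'' and never supply one. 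With the trivial bound $\tau(v)\le k^{2}/6$ the argument yields a conclusion only for $k>n/2+O(1)$, nowhere near $\alpha_{0}\approx 0.4045$. The paper closes exactly this gap analytically: writing $R_{1}(m)=\frac1n\sum_{h}S(h/n)^{4}e(-hm/n)$, it extracts the main term $k^{4}/n$ and controls the error by $\bigl(\max_{n\nmid h}|S(h/n)|\bigr)^{2}(k-k^{2}/n)$ via Parseval, then invokes the estimate $|S(h/n)|\le n/3$ for $n\nmid h$ (Lemma \ref{lem20}, which rests on Lemma \ref{lem1} and uses that every divisor $d>1$ of an \emph{odd} $n$ is odd and $\ge 3$ --- this is where the parity hypothesis actually enters). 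The constants in $N(\alpha)=54/(9\alpha^{3}+\alpha-1)$ are inherited directly from $(n/3)^{2}=n^{2}/9$ and from the coefficient $6$ of $R_{2}$ in the inclusion--exclusion; they cannot be reached by the route you sketch.

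Your proposed substitute in the final paragraph --- a dichotomy between ``structured'' sets handled by Cauchy--Davenport in a quotient and ``unstructured'' sets for which the counting works --- does not repair this: it is an unproved Kneser/Freiman-type structural claim, and in any case it controls the \emph{size} of sumsets such as $|2^{\wedge}\mathcal{A}|$, whereas what is needed is a pointwise lower bound on the \emph{number of representations} of the single element $t$. Two smaller inaccuracies: $t-2a$ lies in $t-2\cdot\mathcal{A}$ (the dilate), not in $t-2^{\wedge}\mathcal{A}$, so your rigidity bound on $r$ does not apply to $r(t-2a)$ (two disjoint pairs summing to $t-2a$ produce four distinct elements summing to $2t-4a$, not $t$, so no contradiction arises and $r(t-2a)$ can be of order $k$); this only degrades the ``second kind'' count from $O(k)$ to $O(k^{2})$, which would still be harmless, but it should be stated correctly. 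And ``three or more such representations are impossible'' overcounts slightly ($\{a,x-a\}$, $\{b,x-b\}$ and $\{x/2,x/2\}$ can all occur). The decisive defect remains the absent exponential-sum (or equivalent equidistribution) estimate.
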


By Theorem D, we know that if $n$ is even and $|\mathcal{A}|$ is close to $n/2$, then $3^{\wedge}\mathcal{A}=\mathbb{Z}_{n}$. But we know few when $|\mathcal{A}|$ is less than $n/2$.
 We pose the following problem:

\begin{prob} {\label{prob}} For any $\mathcal{A}\subset\mathbb{Z}_{n}$ such that $|\mathcal{A}|\geq4$, whether $|3^{\wedge}\mathcal{A}|\geq |\mathcal{A}|$ or not?
\end{prob}

Throughout this paper, define the doubling constant $L(G)$ to be the maximal number of doubles that coincide in group $G$. For any $u\in\mathbb{R}$, define $$S(u)=\sum\limits_{a\in\mathcal{A} }e(ua),$$
where $e(u)=\text{exp}(2\pi iu)$.

\section{Lemmas}
\begin{lem}(\cite{Gallardo02}, Lemma 2.4){\label{lem4}}  For any subset $\mathcal{A}$ of $\mathbb{Z}_{n}$ such that $|\mathcal{A}|\geq3$, one has $|2^{\wedge}\mathcal{A}|\geq |\mathcal{A}|$.
\end{lem}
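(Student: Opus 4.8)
The plan is to establish the cheap bound $|2^{\wedge}\mathcal{A}|\ge |\mathcal{A}|-1$ and then rule out equality. Since $2^{\wedge}(\mathcal{A}+t)=2^{\wedge}\mathcal{A}+2t$, a translation lets us assume $0\in\mathcal{A}$; then for each $a\in\mathcal{A}$ the translate $a+(\mathcal{A}\setminus\{a\})$ is a subset of $2^{\wedge}\mathcal{A}$ with $|\mathcal{A}|-1$ elements, so $|2^{\wedge}\mathcal{A}|\ge|\mathcal{A}|-1=:k-1$. Assume for contradiction that equality holds. Then each such translate equals all of $2^{\wedge}\mathcal{A}$; taking $a=0$ gives $2^{\wedge}\mathcal{A}=\mathcal{A}\setminus\{0\}$, which has two consequences: (i) $0\notin 2^{\wedge}\mathcal{A}$, i.e.\ $\mathcal{A}$ contains no pair $\{x,-x\}$ with $x\ne -x$, so $\mathcal{A}\cap(-\mathcal{A})$ lies in the $2$-torsion of $\mathbb{Z}_n$ and hence has at most two elements; and (ii) $2^{\wedge}\mathcal{A}\subseteq\mathcal{A}$, so $\mathcal{A}$ is closed under forming sums of two distinct elements. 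Furthermore $(a+\mathcal{A})\setminus\{2a\}=(a'+\mathcal{A})\setminus\{2a'\}$ for all $a,a'\in\mathcal{A}$; since $k\ge 3$, the map $a\mapsto 2a$ is not constant on $\mathcal{A}$ (otherwise $\mathcal{A}$ would sit in a coset of the $2$-torsion), so picking $a,a'\in\mathcal{A}$ with $2a\ne 2a'$ we conclude that $\mathcal{A}$ and $t+\mathcal{A}$ differ in exactly two elements, where $t=a'-a$ has order $d\ge 3$.

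Next I would unpack the constraint that $\mathcal{A}$ and its $t$-translate differ in exactly two elements. Working one coset of $\langle t\rangle$ at a time and using that each coset contributes to $\mathcal{A}\triangle(t+\mathcal{A})$ twice the number of its maximal cyclic runs in $\mathcal{A}$, one finds that on all but one coset $\mathcal{A}$ is empty or full, and on the remaining coset the trace of $\mathcal{A}$ is a single nonempty proper arithmetic progression of common difference $t$. Because $0\in\mathcal{A}$ and the subgroup $\langle t\rangle$ is its own negative, fact (i) forbids $\langle t\rangle\subseteq\mathcal{A}$, so $\langle t\rangle$ is precisely that exceptional coset. If $\mathcal{A}\subseteq\langle t\rangle$, then $\mathcal{A}$ is a progression of length $k$ with $3\le k\le d-1$ inside $\langle t\rangle\cong\mathbb{Z}_d$, and the restricted sumset of a progression of length $k$ has $\min(d,2k-3)\ge k$ elements, a contradiction. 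Otherwise, by (ii) the sum of any two (not necessarily distinct) full cosets of $\langle t\rangle$ contained in $\mathcal{A}$ is again a full coset contained in $\mathcal{A}$, and it is not $\langle t\rangle$; hence modulo $\langle t\rangle$ the cosets met by $\mathcal{A}$ are closed under addition and form a subgroup of $\mathbb{Z}_n/\langle t\rangle$. Pulling back, $\mathcal{A}=K\setminus Q$ with $K\supsetneq\langle t\rangle$ a subgroup and $Q\subsetneq\langle t\rangle$ an arithmetic progression of difference $t$ not containing $0$. Then $\mathcal{A}\cap(-\mathcal{A})=K\setminus(Q\cup(-Q))$ has at least $|K|-2|Q|\ge 2d-2(d-1)=2$ elements, and since the borderline case $|Q|=d-1$ forces $Q=\langle t\rangle\setminus\{0\}$, which equals $-Q$ rather than being disjoint from it, the intersection in fact has at least three elements, contradicting (i). Hence equality is impossible and $|2^{\wedge}\mathcal{A}|\ge|\mathcal{A}|$.

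The main obstacle is the structural step in the second paragraph: turning the single identity ``all the sets $a+(\mathcal{A}\setminus\{a\})$ coincide'' into the rigid shape $\mathcal{A}=K\setminus Q$, and then organizing the handful of sub-cases so that consequences (i) and (ii) can be played against that shape. I expect this bookkeeping — rather than any hard inequality — to be the bulk of the work. One is tempted to shortcut via Kneser's theorem applied to $\mathcal{A}+\mathcal{A}=2^{\wedge}\mathcal{A}\cup\{2a:a\in\mathcal{A}\}$, but this route seems only to recover $|2^{\wedge}\mathcal{A}|\ge|\mathcal{A}|-1$, so the rigidity analysis appears unavoidable.
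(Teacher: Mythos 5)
The paper itself offers no proof of this lemma --- it is imported verbatim from \cite{Gallardo02} (their Lemma 2.4) --- so there is nothing internal to compare against; I have therefore checked your argument on its own terms, and it is correct. The bound $|2^{\wedge}\mathcal{A}|\ge k-1$ via the translates $a+(\mathcal{A}\setminus\{a\})$, the observation that equality forces all these translates to coincide with $2^{\wedge}\mathcal{A}$, the run-counting analysis of $|\mathcal{A}\triangle(t+\mathcal{A})|=2$ over cosets of $\langle t\rangle$, and the two terminal cases all hold up. Two comments. First, your Case B ends sooner than you think: once you know that the set $F$ of full cosets satisfies $F+F\subseteq F$ with $\bar 0\notin F$ while $F\cup\{\bar 0\}$ is a subgroup of $\mathbb{Z}_n/\langle t\rangle$, you are already done, since any $\bar c\in F$ has its inverse in $F\cup\{\bar 0\}$ and either alternative forces $\bar 0\in F$; the entire $\mathcal{A}=K\setminus Q$ pullback and the count of $\mathcal{A}\cap(-\mathcal{A})$ can be deleted. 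Second, the whole structural analysis after your first paragraph can be replaced by a three-line computation: from $2^{\wedge}\mathcal{A}=(a+\mathcal{A})\setminus\{2a\}$ for every $a\in\mathcal{A}$, summing the elements of both sides gives $\Sigma(2^{\wedge}\mathcal{A})=(k-2)a+\Sigma(\mathcal{A})$ for every $a\in\mathcal{A}$, hence $(k-2)(a-b)=0$ for all $a,b\in\mathcal{A}$; thus $\mathcal{A}$ lies in a single coset of the subgroup $\{x\in\mathbb{Z}_n:(k-2)x=0\}$, whose order is $\gcd(k-2,n)\le k-2<k$, a contradiction as soon as $k\ge 3$. This is presumably close to what \cite{Gallardo02} actually does. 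Your proof is longer and the rigidity bookkeeping is the risky part, but as written it does go through.
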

\begin{lem}(\cite{Gallardo02}, Lemma 2.2){\label{lem5}}  Let $\mathcal{A}$ be a subset of an Abelian group $G$.
If $|\mathcal{A}|>(|G|+L(G)) / 2$, then $|2^{\wedge}\mathcal{A}|=G$.
\end{lem}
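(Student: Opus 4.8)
The plan is to prove this directly by a counting argument built on the fixed-point-free part of an involution; no algebraic machinery is needed. First I would recall that, by the definition of the doubling constant, $L(G)$ equals the maximum over $c\in G$ of $|\{x\in G:2x=c\}|$, and that each such solution set is either empty or a coset of the $2$-torsion subgroup $G[2]=\{x\in G:2x=0\}$; in particular $L(G)=|G[2]|$ and every solution set $\{x:2x=c\}$ has size at most $L(G)$.

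Now fix an arbitrary $g\in G$; it suffices to produce two \emph{distinct} elements $a,b\in\mathcal{A}$ with $a+b=g$. Consider the involution $\sigma\colon G\to G$ given by $\sigma(x)=g-x$, whose set of fixed points is $F=\{x\in G:2x=g\}$, so that $|F|\le L(G)$. The complement $G\setminus F$ splits into $N:=(|G|-|F|)/2$ pairwise disjoint two-element blocks $\{x,\,g-x\}$ with $x\ne g-x$. Put $\mathcal{A}'=\mathcal{A}\setminus F$. Using $|F|\le L(G)$ and the hypothesis $|\mathcal{A}|>(|G|+L(G))/2$,
\[
|\mathcal{A}'|\ \ge\ |\mathcal{A}|-|F|\ >\ \frac{|G|+L(G)}{2}-|F|\ \ge\ \frac{|G|+|F|}{2}-|F|\ =\ \frac{|G|-|F|}{2}\ =\ N .
\]
Since $|\mathcal{A}'|$ and $N$ are integers we get $|\mathcal{A}'|\ge N+1$, and a subset of a disjoint union of $N$ two-element blocks having more than $N$ elements must contain some block in full. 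Hence there is a block $\{a,b\}=\{x,g-x\}\subseteq\mathcal{A}$ with $a\ne b$, whence $a+b=g$ and $g\in 2^{\wedge}\mathcal{A}$. As $g$ was arbitrary, $2^{\wedge}\mathcal{A}=G$.

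There is essentially no genuine obstacle here; the only point requiring care is the bookkeeping between $|F|$ and $L(G)$ together with the integrality/parity check (the quantity $N$ is automatically an integer because $G\setminus F$ is a union of two-element blocks), which is precisely why the strict inequality in the hypothesis is the natural threshold. One could equivalently argue through the representation function: $|\mathcal{A}\cap(g-\mathcal{A})|\ge 2|\mathcal{A}|-|G|>L(G)$, and at most $L(G)$ of these common elements $x$ can satisfy $g-x=x$, so some $x\in\mathcal{A}$ has $g-x\in\mathcal{A}$ with $g-x\ne x$; or through the character expansion suggested by the sums $S(u)$. The involution version above is the shortest.
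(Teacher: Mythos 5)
Your proof is correct. Note that this paper does not actually prove the lemma---it is quoted from Lemma 2.2 of the cited work of Gallardo et al.---so there is no in-paper argument to compare against; your involution/pigeonhole argument is the standard one, and it is complete: the key bookkeeping point, that the fixed-point set $F=\{x\in G: 2x=g\}$ satisfies $|F|\le L(G)$ for every $g$ (being empty or a coset of the $2$-torsion subgroup), is exactly what makes the threshold $(|G|+L(G))/2$ work, and it is consistent with how the paper later computes $L(E)$ in its Lemma 2.3.
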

\begin{lem}{\label{lem6}} Let $n$ be an even integer and let $E$ be the even elements of $\mathbb{Z}_n$. Then
\begin{displaymath}
L(E)=
\begin{cases}
2, &n\equiv 0 \pmod 4,\\
1, &n\equiv 2 \pmod 4.
\end{cases}
\end{displaymath}
\end{lem}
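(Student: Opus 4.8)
The plan is to compute $L(E)$ directly from the definition: $L(E)$ is the largest number of pairs $(x,y)$ with $x,y \in E$ and $x \ne y$ that share a common sum, i.e. the maximum over $s \in \mathbb{Z}_n$ of the number of unordered pairs $\{x,y\} \subset E$, $x\neq y$, with $x+y=s$. Since $n$ is even, write $n = 2m$ and identify $E = \{0, 2, 4, \dots, 2m-2\}$ with $\mathbb{Z}_m$ via the group isomorphism $2k \mapsto k$. Under this identification the restricted-sum structure on $E$ (as a subset of $\mathbb{Z}_n$) matches the restricted-sum structure on all of $\mathbb{Z}_m$ up to a subtlety: $2k + 2\ell = 2(k+\ell)$ in $\mathbb{Z}_n$, and $k+\ell$ is read mod $m$, so indeed $E$ with its doubling pairs is isomorphic to $\mathbb{Z}_m$ with its doubling pairs. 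So $L(E) = L(\mathbb{Z}_m)$.

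The next step is to evaluate $L(\mathbb{Z}_m)$. For a fixed $s \in \mathbb{Z}_m$, I count solutions of $x + y = s$ with $x \ne y$; equivalently, ordered solutions $(x,y)$ with $x\neq y$ number $m$ minus the number of solutions of $2x = s$, and then I divide by $2$. The equation $2x = s$ in $\mathbb{Z}_m$ has $\gcd(2,m)$ solutions when $\gcd(2,m) \mid s$ and none otherwise. Split into cases. If $m$ is odd (this is the case $n \equiv 2 \pmod 4$), then $2$ is invertible mod $m$, so $2x=s$ always has exactly one solution; hence the number of ordered pairs with $x \ne y$ is $m-1$, giving $(m-1)/2$ unordered pairs, and this is independent of $s$, so $L(\mathbb{Z}_m) = (m-1)/2$. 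That does not match the claimed value $1$, so I must be misreading the definition of $L$ — the intended quantity is presumably the maximal number of \emph{coincidences}, i.e. $L(G)$ counts how many doubles $2a$ (over $a \in G$, $a$ ranging, with the relevant $2$-torsion structure) can be equal, which for $\mathbb{Z}_m$ is the size of the $2$-torsion: $|\{a : 2a = 0\}| = \gcd(2,m)$. So $L(\mathbb{Z}_m) = 2$ if $m$ even, $1$ if $m$ odd. Then $m$ even $\iff n \equiv 0 \pmod 4$ and $m$ odd $\iff n \equiv 2 \pmod 4$, which gives exactly the stated result.

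So the key steps are: (i) establish the group isomorphism $E \cong \mathbb{Z}_{n/2}$ and note it carries the doubling structure over, reducing to computing $L(\mathbb{Z}_{n/2})$; (ii) pin down the precise meaning of $L$ consistent with Lemma~\ref{lem5} (the number of elements in the $2$-torsion subgroup, equivalently the maximal multiplicity with which a value is hit by the doubling map $a \mapsto 2a$); (iii) observe $|\{a \in \mathbb{Z}_{n/2} : 2a = 0\}| = \gcd(2, n/2)$, which is $2$ when $4 \mid n$ and $1$ when $n \equiv 2 \pmod 4$. The main obstacle I anticipate is purely bookkeeping around the definition of $L(G)$: one has to make sure the notion of "doubles that coincide" is applied consistently with how it enters Lemma~\ref{lem5} (so that the inequality $|\mathcal{A}| > (|G| + L(G))/2$ is the right threshold), since the naive "number of representations of a fixed sum as a sum of two distinct elements" gives a different and much larger number. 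Once the definition is fixed, the computation is a one-line gcd argument.
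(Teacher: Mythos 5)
Your final argument is correct and is essentially the paper's: the paper likewise reduces $4i\equiv 4j\pmod n$ to $2i\equiv 2j\pmod{n/2}$ and counts solutions, which is exactly your identification of $E$ with $\mathbb{Z}_{n/2}$ and the computation $|\{a:2a=0\}|=\gcd(2,n/2)$. Your settled reading of $L$ --- the maximal multiplicity of the doubling map $a\mapsto 2a$ --- is the one the paper intends (and the one its proof uses), so the detour through the pair-counting interpretation can simply be deleted.
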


\begin{proof}
If $n\equiv 2 \pmod 4$, then $$E=\left\{2i:\ i=0,1,\ldots,\frac{n}{2}-1\right\}.$$
For all $0\leq i\neq j\leq \displaystyle\frac{n}{2}-1,$ we have $4i\not\equiv4j\pmod n.$
In fact, if $4i\equiv 4j\pmod n$, then $2i\equiv 2j\pmod {n/2}$. Since $(2,n/2)=1$, we have $i\equiv j\pmod {n/2}$, contradiction.
Hence $L(E)=1$.

If $n\equiv 0 \pmod 4$, then
$$E=\left\{2i:\ i=0,1,\ldots,\frac{n}{4}-1\right\}\cup \left\{2i+\frac{n}{2}:\ i=0,1,\ldots,\frac{n}{4}-1\right\}.$$
Noting that $0\leq i\neq j\leq \displaystyle\frac{n}{4}-1,$ we have $4i\not\equiv4j\pmod n$, and
$$2\cdot 2i\equiv 2\cdot\left(2i+\frac{n}{2}\right)\pmod n.$$Hence $L(E)=2$.
\end{proof}

\begin{lem}(\cite{Gallardo02}, Lemma 3.3){\label{lem1}} Let $d\geq3$ be an odd integer, and $X$ be a positive real number. For any $\underline{x}=(x_{1},\ldots,x_{n})\in \mathbb{R}^{d}$, we put
$$T(\underline{x})=\sum\limits_{j=1}^{d}x_{j}e(j/d).$$
Then one has $$\max\limits_{\underline{x}\in[0,X]^{d}}|T(x)|=\displaystyle\frac{X}{2\sin(\pi/2d)}.$$
\end{lem}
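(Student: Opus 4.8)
The plan is to turn the optimization over the box $[0,X]^{d}$ into a purely combinatorial question about $d$-th roots of unity. Since $\underline{x}\mapsto T(\underline{x})$ is an $\mathbb{R}$-linear map $\mathbb{R}^{d}\to\mathbb{C}$ and $z\mapsto|z|$ is convex, the function $\underline{x}\mapsto|T(\underline{x})|$ is convex on the polytope $[0,X]^{d}$, so it attains its maximum at a vertex, i.e.\ at some $\underline{x}$ with every $x_{j}\in\{0,X\}$. (If one prefers to avoid the extreme-point theorem: freeze all coordinates but one and note that $t\mapsto|tw+c|$ is convex on $[0,X]$, then iterate.) Writing $\zeta=e(1/d)$ and encoding the coordinates equal to $X$ by a subset $S\subseteq\{1,\dots,d\}$, we are reduced to
$$\max_{\underline{x}\in[0,X]^{d}}|T(\underline{x})|=X\cdot\max_{S\subseteq\{1,\dots,d\}}\Bigl|\sum_{j\in S}\zeta^{\,j}\Bigr|,$$
so it suffices to prove that this last maximum equals $1/(2\sin(\pi/2d))$.

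For the lower bound I would take $S=\{1,2,\dots,(d-1)/2\}$ — an honest set of integers because $d$ is odd — and sum the geometric series:
$$\Bigl|\sum_{j=1}^{(d-1)/2}\zeta^{\,j}\Bigr|=\frac{|\zeta^{(d-1)/2}-1|}{|\zeta-1|}=\frac{\sin\bigl(\pi(d-1)/(2d)\bigr)}{\sin(\pi/d)}=\frac{\cos(\pi/2d)}{2\sin(\pi/2d)\cos(\pi/2d)}=\frac{1}{2\sin(\pi/2d)},$$
which already exhibits the claimed value.

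For the upper bound, fix any $S$ and put $v=\sum_{j\in S}\zeta^{\,j}$. Choosing $t\in\mathbb{R}$ with $v=|v|\,e(t)$ gives $|v|=\sum_{j\in S}\cos\bigl(2\pi(j/d-t)\bigr)$. Deleting the indices whose cosine is negative and adjoining the missing ones whose cosine is positive can only increase this sum, so $|v|\le\sum_{j\in S'}\cos\bigl(2\pi(j/d-t)\bigr)\le\bigl|\sum_{j\in S'}\zeta^{\,j}\bigr|$, where $S'=\{\,j\in\{1,\dots,d\}:\ \cos(2\pi(j/d-t))>0\,\}$. The crucial observation is that $S'$ is a block of consecutive residues modulo $d$: it is exactly the set of grid points $j/d$ lying in an open arc of length $\tfrac12$ on $\mathbb{R}/\mathbb{Z}$, hence $|S'|<d$. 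Summing the geometric series over a cyclic block of $m$ roots of unity with $1\le m\le d-1$ yields modulus $\sin(\pi m/d)/\sin(\pi/d)$, and because $d$ is odd the numerator never reaches $1$ and is largest when $m=(d\pm1)/2$, where it equals $\cos(\pi/2d)$. Therefore $|v|\le\cos(\pi/2d)/\sin(\pi/d)=1/(2\sin(\pi/2d))$ for every $S$, and combining with the lower bound proves the claim, hence the lemma.

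The step I expect to be the main obstacle is the upper bound: the clean justification that for the phase $t$ realizing $|v|$ the index set $S'$ where the cosine is positive is genuinely an arc of consecutive roots of unity with $|S'|<d$, so that its sum collapses to a single geometric series. Everything after that is routine trigonometry, and the reduction to vertices together with the lower-bound construction is short.
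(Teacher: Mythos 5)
Your proof is correct, but note that the paper itself gives no proof of this lemma: it is quoted verbatim from Gallardo--Grekos--Habsieger--Hennecart--Landreau--Plagne (\cite{Gallardo02}, Lemma 3.3), so there is no in-paper argument to compare against. Your route --- reduce by convexity to vertices of the cube, then bound $\bigl|\sum_{j\in S}e(j/d)\bigr|$ by replacing $S$ with the arc $S'$ of indices whose projection onto the direction of the sum is positive, and finally evaluate the geometric series over a block of $m$ consecutive $d$-th roots of unity to get $\sin(\pi m/d)/\sin(\pi/d)$, maximized at $m=(d\pm1)/2$ since $d$ is odd --- is the standard one and all steps check out, including the point you flag as delicate: $S'$ is indeed the set of grid points $j/d$ in an open arc of length $\tfrac12$, hence a cyclic block of at most $(d+1)/2<d$ consecutive residues.
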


\begin{lem}{\label{lem2}} For any $m\in\mathbb{Z}_{n}$, let $R(m)$, $R_{1}(m)$, $R_{2}(m)$, $R_{3}(m)$, $R_{4}(m)$, $R_{5}(m)$ be the number of representations of $m$ as a sum of four distinct elements of $\mathcal{A}$;
 four (not necessarily distinct) elements of $\mathcal{A}$; two elements of $\mathcal{A}$ with twice another (possibly the same) element of $\mathcal{A}$;
twice one element of $\mathcal{A}$ with twice another (possibly the same) element of $\mathcal{A}$;
one element of $\mathcal{A}$ with three times another (possibly the same) element of $\mathcal{A}$;
four times an element of $\mathcal{A}$, respectively.
Then \begin{equation}{\label{2.1}}R(m)=R_{1}(m)-6R_{2}(m)+3R_{3}(m)+8R_{4}(m)-6R_{5}(m).\end{equation}
\end{lem}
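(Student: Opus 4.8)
The plan is to count, for a fixed $m\in\mathbb{Z}_n$, the ordered $4$-tuples $(a_1,a_2,a_3,a_4)\in\mathcal{A}^4$ with $a_1+a_2+a_3+a_4=m$, and then use inclusion–exclusion on the ``coincidence pattern'' of the indices to isolate the tuples whose four entries are pairwise distinct. Concretely, let $N(m)$ denote the number of such ordered quadruples with no restriction; then $N(m)=R_1(m)$ by definition. Each ordered quadruple falls into exactly one set partition of $\{1,2,3,4\}$ according to which coordinates are equal, and I would group these fifteen set partitions by type: the partition into four singletons (type $1+1+1+1$), the seven partitions of type $2+1+1$, the three of type $2+2$, the four of type $3+1$, and the one of type $4$. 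The ordered count of quadruples realizing the all-singletons type is exactly $4!\,R(m)=24R(m)$, since each unordered set of four distinct elements summing to $m$ is counted $4!$ times.

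Next I would express the ordered count for each of the other four types in terms of $R_2,R_3,R_4,R_5$. For a type-$(2+1+1)$ partition, say coordinates $1,2$ equal and $3,4$ free, the count is the number of triples $(a,b,c)$ with $2a+b+c=m$; as an \emph{ordered} count in $b,c$ this is $2$ times the number of representations of $m$ as (one doubled element) $+$ (two distinct-or-equal elements)\,—\,but it is cleaner to say the ordered count over $(a;b,c)$ with $2a+b+c=m$ equals $R_2(m)$ if we define $R_2$ as an ordered count, or $2R_2(m)$ minus a diagonal correction if $R_2$ counts unordered $\{b,c\}$; I would fix the convention so that $R_2(m)$ is precisely the number of solutions of $2a+b+c=m$ with $a,b,c\in\mathcal{A}$ (i.e. $b,c$ ordered), and similarly $R_3(m)$ counts $2a+2b=m$ with $a,b$ ordered, $R_4(m)$ counts $a+3b=m$, and $R_5(m)$ counts $4a=m$. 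With these conventions each type-$(2+1+1)$ partition contributes $R_2(m)$ and there are $\binom{4}{2}=6$ of them; each type-$(2+2)$ partition contributes $R_3(m)$ and there are $3$ of them; each type-$(3+1)$ partition contributes $R_4(m)$ and there are $4$ of them; the type-$(4)$ partition contributes $R_5(m)$. Hence
\begin{equation*}
R_1(m)=24R(m)+6R_2(m)+3R_3(m)+4R_4(m)+R_5(m)+(\text{lower-type overlaps inside the }R_i).
\end{equation*}

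The remaining point is that $R_2,R_3,R_4,R_5$ themselves double-count tuples with further coincidences, so I would either (a) work with the ``distinct'' versions of each count and assemble them by Möbius inversion over the partition lattice of $\{1,2,3,4\}$, or (b) keep the ``with repetition'' definitions as in the statement and absorb the overlaps, which is exactly why the final coefficients are $-6,+3,+8,-6$ rather than the naive $-6,-3,-4,-1$. The clean way: by Möbius inversion on the partition lattice $\Pi_4$, the number of tuples with all coordinates distinct is $\sum_{\pi\in\Pi_4}\mu(\hat 0,\pi)\,f(\pi)$ where $f(\pi)$ counts tuples constant on each block of $\pi$ \emph{without} further restriction, and $f(\pi)$ depends only on the block-size multiset. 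The Möbius values are $\mu(\hat0,\hat0)=1$; for a type-$(2+1+1)$ partition $\mu=-1$; for type-$(2+2)$, $\mu=+1$; for type-$(3+1)$, $\mu=+2$; for type-$(4)$, $\mu=-6$. Multiplying by the number of partitions of each type ($1,6,3,4,1$) and by $24$ on the left (to pass from ordered distinct tuples to $R(m)$), and matching $f$ of each type to the appropriate $R_i$ after normalizing the ordered counts, yields
\begin{equation*}
24R(m)=R_1(m)-6R_2'(m)+3R_3'(m)+8R_4'(m)-6R_5(m),
\end{equation*}
and a final division by $24$, together with the bookkeeping that converts the ordered $R_i'$ into the quantities $R_i$ defined in the lemma (the factor $24/\,|\mathrm{Aut}|$ for each block pattern is what turns $-6\mu=-6$ into the stated $-6$ for $R_2$, $+3\mu\cdot(\text{count})$ into $+3$ for $R_3$, $+2\mu\cdot 4=+8$ for $R_4$, and $-6$ for $R_5$), gives \eqref{2.1}. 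The main obstacle, and the only place care is genuinely needed, is getting these normalization constants right: one must be scrupulous about whether each $R_i$ counts ordered or unordered tuples and about how the automorphism group of each block pattern interacts with the $4!=24$ orderings, since a single slip changes a coefficient. I would verify the final identity by a small sanity check — e.g. take $\mathcal{A}=\mathbb{Z}_n$ for small $n$, where all six quantities are explicitly computable — to confirm the coefficients $1,-6,3,8,-6$.
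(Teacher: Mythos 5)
Your M\"obius-inversion argument over the partition lattice $\Pi_4$ is a valid and genuinely different route to \eqref{2.1}. The paper instead argues by direct case analysis: it classifies every representation of $m$ by the multiset shape of its four summands (all distinct; $a+b+2c$; $2a+2b$; $a+3b$; $4a$) and checks that each shape contributes equally to the two sides of \eqref{2.1} (for instance a shape $\{a,b,c,c\}$ contributes $0$ to $R$, $12$ to $R_1$ and $2$ to $R_2$, and indeed $12-6\cdot 2=0$). Your computation of the M\"obius values ($1,-1,+1,+2,-6$) and of the number of partitions of each type ($1,6,3,4,1$) is correct and immediately yields the coefficients $1,-6,+3,+8,-6$; this is arguably cleaner and generalizes to $h^{\wedge}\mathcal{A}$ for other $h$, whereas the paper's case check is more elementary and simultaneously pins down what each $R_i$ actually counts. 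The one place your write-up goes astray is the final normalization: under the conventions the paper's proof reveals, $R$ and $R_1$ are both \emph{ordered} counts (each set of four distinct summands contributes $24$ to each), and each $R_i$ for $i=2,\dots,5$ is exactly $f(\pi)$ for one representative partition $\pi$ of the corresponding type --- $R_2$ counts triples $(a,b,c)$ with $a+b+2c=m$, $R_3$ counts pairs $(a,b)$ with $2a+2b=m$, and so on. With that reading, $\sum_{\pi\in\Pi_4}\mu(\hat{0},\pi)f(\pi)$ \emph{is} the lemma verbatim: there is no factor of $24$ to divide out and no automorphism correction, so your displayed identity $24R(m)=R_1(m)-\cdots$ followed by ``a final division by $24$'' is a slip (taken literally it would give $R=\frac{1}{24}\left(R_1-6R_2+3R_3+8R_4-6R_5\right)$, which is false). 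Your instinct that the statement is ambiguous about ordered versus unordered counting is fair, but the convention must be fixed as above for \eqref{2.1} to hold, and the sanity check you propose would have caught this.
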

\begin{proof} Write
\begin{eqnarray*}
\mathcal{A}_{0}&=&\left\{(a_{1},a_{2},a_{3},a_{4})\in \mathcal{A}^{4}:\; m=a_{1}+a_{2}+a_{3}+a_{4},  \; a_{1},a_{2},a_{3},a_{4} \text{ all distinct}\right\},\\
\mathcal{A}_{1}&=&\left\{(a_{1},a_{2},a_{3},a_{4})\in \mathcal{A}^{4}:\; m=a_{1}+a_{2}+a_{3}+a_{4}\right\},\\
\mathcal{A}_{2}&=&\left\{(a_{1},a_{2},a_{3},a_{3})\in \mathcal{A}^{4}:\; m=a_{1}+a_{2}+2a_{3},\;  a_{1},a_{2},a_{3} \text{ possibly the same}\right\},\\
\mathcal{A}_{3}&=&\left\{(a_{1},a_{1},a_{2},a_{2})\in \mathcal{A}^{4}:\; m=2a_{1}+2a_{2},\;  a_{1},a_{2} \text{ possibly the same}\right\},\\
\mathcal{A}_{4}&=&\left\{(a_{1},a_{2},a_{2},a_{2})\in \mathcal{A}^{4}:\; m=a_{1}+3a_{2},\;  a_{1},a_{2} \text{ possibly the same}\right\},\\
\mathcal{A}_{5}&=&\left\{(a_{1},a_{1},a_{1},a_{1})\in \mathcal{A}^{4}:\; m=4a_{1}\right\}.
\end{eqnarray*}
To proof (\ref{2.1}), we consider the following cases:

{\bf Case 1.} $m$ cannot be represented as a sum of four elements of $\mathcal{A}$.
 Then $$R(m)=R_i(m)=0,\quad i=1,\ldots,5.$$

{\bf Case 2.} $m=a_{1}+a_{2}+a_{3}+a_{4}$, $a_{i}\in \mathcal{A}$ and $a_{i}^{'}s$ are distinct. Then
 the contribution $\left(a_{1},a_{2},a_{3},a_{4}\right)$ to $R(m)$ and $R_1(m)$ both are 24, and
the contribution $\left(a_{1},a_{2},a_{3},a_{4}\right)$ to $R_{2}(m)$, $R_{3}(m)$, $R_{4}(m)$, $R_{5}(m)$ are all zero.

{\bf Case 3.} $m=a_{1}+a_{2}+a_{3}+a_{3}$, $a_{i}\in \mathcal{A}$ and $a_{1},a_{2},a_{3}$ are distinct. Then
 the contribution $\left(a_{1},a_{2},a_{3},a_{3}\right)$ to $R(m)$, $R_{3}(m)$, $R_{4}(m)$, $R_{5}(m)$ are all zero,
the contribution $\left(a_{1},a_{2},a_{3},a_{3}\right)$ to $R_1(m)$ and $R_2(m)$ are 12, 2, respectively.

{\bf Case 4.} $m=a_{1}+a_{1}+a_{2}+a_{2}$, $a_{i}\in \mathcal{A}$ and $a_{1},a_{2}$ are distinct. Then
the contribution $\left(a_{1},a_{1},a_{2},a_{2}\right)$ to $R(m)$, $R_{4}(m)$, $R_{5}(m)$ are all zero,
the contribution $\left(a_{1},a_{1},a_{2},a_{2}\right)$ to $R_1(m)$, $R_2(m)$ and $R_3(m)$ are 6, 2, 2, respectively.

{\bf Case 5.} $m=a_{1}+a_{2}+a_{2}+a_{2}$, $a_{i}\in \mathcal{A}$ and $a_{1},a_{2}$ are distinct. Then
the contribution $\left(a_{1},a_{2},a_{2},a_{2}\right)$ to $R(m)$, $R_{3}(m)$, $R_{5}(m)$ are all zero,
the contribution $\left(a_{1},a_{2},a_{2},a_{2}\right)$ to $R_1(m)$, $R_2(m)$ and $R_4(m)$ are 4, 2, 1, respectively.

{\bf Case 6.} $m=a_{1}+a_{1}+a_{1}+a_{1}$, $a_{1}\in \mathcal{A}$. Then
the contribution $\left(a_{1},a_{1},a_{1},a_{1}\right)$ to $R_i(m)$, $i=1,\ldots,5$ are all one,
the contribution $\left(a_{1},a_{1},a_{2},a_{2}\right)$ to $R(m)$ is zero.

By the discussion of the above cases, we have (\ref{2.1}).

This completes the proof of Lemma \ref{lem2}.
\end{proof}

The following lemma is contained in the proof of Proposition 3.4 of paper \cite{Gallardo02}. For the readability of the paper, we rewrite the proof.
\begin{lem}{\label{lem20}}
Let $\mathcal{A}\subseteq \mathbb{Z}_n$ and $h$ be a positive integer. If $n\nmid h$, then $$\left|S\left(\frac{h}{n}\right)\right|\leq \frac{n}{3}.$$
\end{lem}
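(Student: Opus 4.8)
The plan is to bound the exponential sum $S(h/n)=\sum_{a\in\mathcal{A}}e(ha/n)$ by observing that $\mathcal{A}$ is a subset of $\mathbb{Z}_n$, so the worst case for $|S(h/n)|$ occurs when $\mathcal{A}$ is as large as possible and its image under multiplication by $h$ is concentrated. Let $d=n/\gcd(h,n)>1$; as $a$ runs over $\mathbb{Z}_n$, the value $ha \bmod n$ takes each multiple of $\gcd(h,n)$ exactly $\gcd(h,n)$ times, so $\{e(ha/n):a\in\mathbb{Z}_n\}$ consists of the $d$-th roots of unity, each appearing $\gcd(h,n)=n/d$ times. Writing $r_j$ for the number of $a\in\mathcal{A}$ with $ha\equiv j\cdot(n/d)\pmod n$, we have $0\le r_j\le n/d$ for $j=0,\ldots,d-1$ and $S(h/n)=\sum_{j=0}^{d-1} r_j\, e(j/d)$.

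First I would reduce to the case $d\ge 3$: if $d=2$ then $S(h/n)=r_0-r_1$ with $0\le r_0,r_1\le n/2$, so $|S(h/n)|\le n/2$; this is weaker than $n/3$ and must be handled, so I would need $d\ge 3$ — but actually the claimed bound $n/3$ already fails trivially for $d=2$ unless one rules it out, so I expect the statement implicitly or the proof to treat $d=2$ separately (e.g. using that one cannot have both $r_0=n/2$ and $r_1=n/2$ simultaneously for a genuine subset, giving the needed slack, or the intended reading restricts attention to odd $n$). Assuming $d\ge 3$, the key step is to invoke Lemma \ref{lem1} with the vector $\underline{x}=(r_1,\ldots,r_d)$ (after cyclically relabelling so the sum is $\sum_{j=1}^d x_j e(j/d)$, which only multiplies $S$ by a root of unity and does not change the modulus) and $X=n/d$: this gives
\begin{equation*}
\left|S\!\left(\frac{h}{n}\right)\right|=|T(\underline{x})|\le \max_{\underline{x}\in[0,X]^d}|T(\underline{x})|=\frac{X}{2\sin(\pi/2d)}=\frac{n/d}{2\sin(\pi/2d)}.
\end{equation*}

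It then remains to show $\dfrac{n/d}{2\sin(\pi/2d)}\le \dfrac{n}{3}$, i.e. $3\le 2d\sin(\pi/2d)$, for every integer $d\ge 3$. The function $t\mapsto 2t\sin(\pi/2t)$ is increasing in $t$ on $[3,\infty)$ and tends to $\pi>3$; at $t=3$ it equals $6\sin(\pi/6)=6\cdot\frac12=3$. Hence $2d\sin(\pi/2d)\ge 3$ with equality exactly at $d=3$, which yields the bound. The main obstacle is the borderline case $d=2$ (and making sure the cyclic relabelling legitimately brings the sum into the exact form required by Lemma \ref{lem1}); everywhere else the estimate is comfortably slack. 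I would also double-check the monotonicity claim by writing $2t\sin(\pi/2t)=\pi\cdot\frac{\sin(\pi/2t)}{\pi/2t}$ and using that $(\sin u)/u$ is decreasing on $(0,\pi/2]$, so as $t$ increases $u=\pi/2t$ decreases and $(\sin u)/u$ increases — giving the monotonicity cleanly and confirming $d=3$ is the minimiser.
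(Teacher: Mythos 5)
Your approach is exactly the paper's: write $d=n/\gcd(h,n)$, note that $S(h/n)=\sum_j r_j e(j/d)$ with $0\le r_j\le n/d$, invoke Lemma \ref{lem1} with $X=n/d$, and finish by checking $2d\sin(\pi/2d)\ge 3$ (the paper does this last step via $\sin t\ge 3t/\pi$ on $[0,\pi/6]$, you via monotonicity of $(\sin u)/u$; the two are equivalent). Your suspicion about $d=2$ is well founded and is in fact a genuine defect of the lemma as stated: for $n=4$, $h=2$, $\mathcal{A}=\{0,2\}$ one has $S(h/n)=S(1/2)=2>n/3$, and in general when $\gcd(h,n)=n/2$ the residues $a$ with $ha\equiv 0\pmod n$ are exactly the $n/2$ even ones, so $|S|$ can equal $n/2$ --- there is no ``slack'' to exploit. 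The paper's own proof silently skips this by applying Lemma \ref{lem1}, which is only stated for \emph{odd} $d\ge 3$, without verifying that hypothesis.

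The gap in your write-up is the claim that ``everywhere else the estimate is comfortably slack.'' Lemma \ref{lem1} does not cover even $d\ge 4$ at all, and the conclusion genuinely fails there too: for $n=4$, $h=1$, $\mathcal{A}=\{0,1\}$ one gets $|S(1/4)|=|1+i|=\sqrt{2}>4/3$; more generally for $d=4$ the maximum of $\bigl|\sum_{j=0}^{3}r_j e(j/4)\bigr|$ subject to $0\le r_j\le n/4$ is $(r_0-r_2)^2+(r_1-r_3)^2$ maximized, i.e.\ $\sqrt{2}\,n/4>n/3$. So the correct repair is not to treat $d=2$ as an isolated borderline case but to require $d$ odd --- in practice, to state the lemma for odd $n$, which is the only setting in which it is used (in the proof of Theorem \ref{thm1}, $n$ is odd, so every divisor $d>1$ of $n$ is odd and at least $3$, and then your argument, which is the paper's argument, goes through completely).
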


\begin{proof}Write $$q=\gcd(h, n), \;h=qh',\; n=qd.$$
Then $(h',d)=1$ and
\begin{equation}{\label{2.3}} S\left(\frac{h}{n}\right)=S\left(\frac{h'}{d}\right).\end{equation}
For any $j$, we have
$$\sharp\left\{a\in \mathcal{A}:\; h'a\equiv j\pmod d\right\}\leq \displaystyle\frac{n}{d}.$$
For $j=1, \ldots, d$, let
$$x_{j}=\sharp\left\{a\in \mathcal{A}:\; h'a\equiv j\pmod d\right\}.$$
Then by (\ref{2.3}) we have
$$\displaystyle S\left(\frac{h}{n}\right)=S\left(\frac{h'}{d}\right)=\sum\limits_{a\in \mathcal{A}}e\left(\frac{h'a}{d}\right)=\sum\limits_{j=1}^{d}x_{j}e\left(\frac{j}{d}\right).$$
Put $$\underline{x}=(x_{1},x_{2},\ldots, x_{d}).$$
Then by Lemma \ref{lem1}, we have
$$\displaystyle \left|S\left(\frac{h}{n}\right)\right|\leq \frac{n}{2d\sin\pi/2d}.$$
Noting that $\sin t\geq3t/\pi,\; 0\leq t\leq\pi/6$, we have
$$\left|S\left(\frac{h}{n}\right)\right|\leq \frac{n}{3}.$$

This completes the proof of Lemma \ref{lem20}.
\end{proof}

\section{\bf Proof of Theorem \ref{thm2}.}
{\bf Fact I} For any $\mathcal{A}\subset \mathbb{Z}_n$, we have $|3^{\wedge}\mathcal{A}|\geq |\mathcal{A}|-2$.

In fact, we may assume that
$\mathcal{A}=\{0=a_0, a_1, \ldots, a_{k-1}\}\subset \mathbb{Z}_n$.
Noting that $$a_0+a_1+a_{k-1}, \,a_0+a_2+a_{k-1},\,\cdots, a_{0}+a_{k-2}+a_{k-1} \in 3^{\wedge}\mathcal{A},$$ we have $|3^{\wedge}\mathcal{A}|\geq |\mathcal{A}|-2$.

Let $E$ and $O$ be the even and the odd elements of $\mathbb{Z}_n$, respectively. Write
$$\mathcal{A}_{e}=E\cap\mathcal{A}, \quad \mathcal{A}_{o}=O\cap\mathcal{A}.$$
Since $|\mathcal{A}|\geq n/2+3$, we have $\min\{|\mathcal{A}_{e}|,|\mathcal{A}_{o}|\}\geq3$.

If $x$ is even, then
$$2^{\wedge}\mathcal{A}_{e},\ x-2^{\wedge}\mathcal{A}_{o},\ 3^{\wedge}\mathcal{A}_{e}\subset E.$$
By Lemma \ref{lem4} and Fact I, we have
$$\displaystyle |2^{\wedge}\mathcal{A}_{e}|+|x-2^{\wedge}\mathcal{A}_{o}|\geq|\mathcal{A}_{e}|+|\mathcal{A}_{o}|=|\mathcal{A}|\geq\frac{n}{2}+3,$$
$$\displaystyle |3^{\wedge}\mathcal{A}_{e}|+|x-2^{\wedge}\mathcal{A}_{o}|\geq|\mathcal{A}_{e}|-2+|\mathcal{A}_{o}|\geq\frac{n}{2}+1.$$
Thus $$2^{\wedge}\mathcal{A}_{e}\cap (x-2^{\wedge}\mathcal{A}_{o})\neq\emptyset, \quad 3^{\wedge}\mathcal{A}_{e}\cap (x-2^{\wedge}\mathcal{A}_{o})\neq\emptyset.$$
That is,
$$ x\in 2^{\wedge}\mathcal{A}_{e}+2^{\wedge}\mathcal{A}_{o},\quad
 x\in 3^{\wedge}\mathcal{A}_{e}+2^{\wedge}\mathcal{A}_{o}.$$
Hence $E\subset4^{\wedge}\mathcal{A}$, $E\subset5^{\wedge}\mathcal{A}$.

If $x$ is odd, then
$$3^{\wedge}\mathcal{A}_{o},\ x-\mathcal{A}_{e},\ x-2^{\wedge}\mathcal{A}_{e}\subset O.$$
By Lemma \ref{lem4} and Fact I, we have
$$\displaystyle |3^{\wedge}\mathcal{A}_{o}|+|x-\mathcal{A}_{e}|\geq|\mathcal{A}_{o}|-2+|\mathcal{A}_{e}|\geq\frac{n}{2}+1,$$
$$\displaystyle |3^{\wedge}\mathcal{A}_{o}|+|x-2^{\wedge}\mathcal{A}_{e}|\geq|\mathcal{A}_{o}|-2+|\mathcal{A}_{e}|\geq\frac{n}{2}+1.$$
Thus $$3^{\wedge}\mathcal{A}_{o}\cap (x-\mathcal{A}_{e})\neq\emptyset, \quad 3^{\wedge}\mathcal{A}_{o}\cap (x-2^{\wedge}\mathcal{A}_{e})\neq\emptyset.$$
That is $$x\in 3^{\wedge}\mathcal{A}_{o}+\mathcal{A}_{e},\quad
 x\in 3^{\wedge}\mathcal{A}_{0}+2^{\wedge}\mathcal{A}_{e}.$$
Hence $O\subset4^{\wedge}\mathcal{A}$, $O\subset5^{\wedge}\mathcal{A}$.

This completes the proof of Theorem \ref{thm2}.

\section{\bf Proof of Theorem \ref{thm3}.}
By Lemma \ref{lem5} and Lemma \ref{lem6}, we have
$$\displaystyle|\mathcal{A}_{e}|\geq\frac{n}{4}+3>\frac{|E|+2}{2},$$ thus we have $2^{\wedge}\mathcal{A}_{e}=E.$

If $x$ is even, then
$$2^{\wedge}\mathcal{A}_{e},\ x-2^{\wedge}\mathcal{A}_{o}\subset E,$$
and $$|2^{\wedge}\mathcal{A}_{e}|+|x-2^{\wedge}\mathcal{A}_{o}|\geq|E|+|\mathcal{A}_{o}|-1\geq\frac{n}{2}+1.$$
Thus $$2^{\wedge}\mathcal{A}_{e}\cap (x-2^{\wedge}\mathcal{A}_{o})\neq\emptyset.$$
That is, $x\in 2^{\wedge}\mathcal{A}_{e}+2^{\wedge}\mathcal{A}_{o}.$
Hence $E\subset4^{\wedge}\mathcal{A}$.

If $x$ is odd, then
$$3^{\wedge}\mathcal{A}_{e},\ x-\mathcal{A}_{o}\subset E.$$
Take $b\in\mathcal{A}_{e}$ and consider  $\mathcal{B}=\mathcal{A}_{e}\setminus\{b\}$. Since
$$\displaystyle|\mathcal{B}|\geq\frac{n}{4}+2>\frac{|E|+2}{2}.$$ Again by Lemma \ref{lem5}, we have $2^{\wedge}\mathcal{B}=E$ and then $b+2^{\wedge}\mathcal{B}=E$.
Hence $E\subset3^{\wedge}\mathcal{A}_{e}$.

Noting that $$\displaystyle |3^{\wedge}\mathcal{A}_{e}|+|x-\mathcal{A}_{o}|\geq|E|+2=\frac{n}{2}+2,$$
we have $3^{\wedge}\mathcal{A}_{e}\cap (x-\mathcal{A}_{o})\neq\emptyset.$ That is, $x\in 3^{\wedge}\mathcal{A}_{e}+\mathcal{A}_{o}.$
Hence $O\subset4^{\wedge}\mathcal{A}$.

This completes the proof of Theorem \ref{thm3}.

\section{\bf Proof of Theorem \ref{thm1}.}
Write $|\mathcal{A}|=k$. Let $R(m)$, $R_i(m)(i=1,\ldots,5)$ be as in Lemma \ref{lem2}.
Noting that\begin{displaymath}
\displaystyle\frac{1}{n}\sum\limits_{h=1}^{n}e\left(\frac{ht}{n}\right)=
\begin{cases}
1, \quad&\; \text{if } n\mid t,\\
0,\quad&\; \text{otherwise},
\end{cases}
\end{displaymath}
for any $m\in\mathbb{Z}_{n}$, we have
\begin{eqnarray}{\label{2.2}}
R_{1}(m)&=&\sum\limits_{\substack{a_{1},a_{2},a_{3},a_{4}\in \mathcal{A}\\ m=a_{1}+a_{2}+a_{3}+a_{4}}}1\nonumber\\
&=&\displaystyle\sum\limits_{a_{1}\in\mathcal{A}}\sum\limits_{a_{2}\in\mathcal{A}}\sum\limits_{a_{3}\in\mathcal{A}}
\sum\limits_{a_{4}\in\mathcal{A}}\frac{1}{n}\sum\limits_{h=1}^{n}e\left(\frac{h (-m+a_{1}+a_{2}+a_{3}+a_{4})}{n}\right)\nonumber\\
&=&\displaystyle\frac{1}{n}\sum\limits_{h=1}^{n}S\left(\frac{h}{n}\right)^{4}e\left(-\frac{hm}{n}\right)\nonumber\\
&\geq&\displaystyle\frac{k^{4}}{n}-\frac{1}{n}\sum\limits_{h=1}^{n-1}\left|S\left(\frac{h}{n}\right)\right|^{4}\\
&\geq&\displaystyle\frac{k^{4}}{n}-\left(\max\limits_{h:n\nmid h}\left|S\left(\frac{h}{n}\right)\right|\right)^{2}\left(\frac{1}{n}\sum\limits_{h=1}^{n-1}\left|S\left(\frac{h}{n}\right)\right|^{2}\right)\nonumber\\
&\geq&\displaystyle\frac{k^{4}}{n}-\left(\max\limits_{h:n\nmid h}\left|S\left(\frac{h}{n}\right)\right| \right)^{2} \left(\frac{1}{n}\sum\limits_{h=1}^{n}\left|S\left(\frac{h}{n}\right)\right|^{2}-\frac{k^{2}}{n}\right)\nonumber\\
&=&\displaystyle\frac{k^{4}}{n}-\left(\max\limits_{h:n\nmid h}\left|S\left(\frac{h}{n}\right)\right|\right)^{2}\left(k-\frac{k^{2}}{n}\right).\nonumber
\end{eqnarray}
By Lemma \ref{lem20}, we have
\begin{equation}{\label{3.2}}R_{1}(m)\geq\displaystyle\frac{k^{4}}{n}-\left(\frac{n}{3}\right)^{2}\left(k-\frac{k^{2}}{n}\right).\end{equation}

If $k>\alpha n$ and $n>\displaystyle\frac{54}{9\alpha^{3}+\alpha-1}$, then
$$9k^3-n^3+kn^2>n^3(9\alpha^3+\alpha-1)>54n^2>54kn,$$
we have \begin{equation}{\label{3.3}}\frac{k^{3}}{n}-\frac{n^2}{9}+\frac{kn}{9}-6k>0.\end{equation}
Moreover, $R_{2}(m)\leq k(k-1)$, $R_{5}(m)\leq k$. By Lemma \ref{lem2}, (\ref{2.2})-(\ref{3.3}), we have
\begin{eqnarray*}R(m)&=&R_{1}(m)-6R_{2}(m)+3R_{3}(m)+8R_{4}(m)-6R_{5}(m)\\
&\geq&\displaystyle\frac{k^{4}}{n}-\left(\frac{n}{3}\right)^{2}\left(k-\frac{k^{2}}{n}\right)-6k(k-1)-6k>0.\end{eqnarray*}

This completes the proof of Theorem \ref{thm1}.

\section{\bf Some Remark}

We can show that the answer to Problem \ref{prob} for $|\mathcal{A}|=4,5$ is positive.

For $|\mathcal{A}|=4$, it is easy to see that $|3^{\wedge}\mathcal{A}|=4$.

For $|\mathcal{A}|=5$, we may
assume that $$\mathcal{A}=\{0=a_0,a_1,a_2,a_3,a_4\}\subset\mathbb{Z}_{n}.$$
It is obviously that $$a_0+a_1+a_2,\,a_0+a_1+a_3,\,a_0+a_1+a_4\in 3^{\wedge}\mathcal{A},$$
$$a_1+a_2+a_3\neq a_0+a_1+a_2, \quad a_0+a_1+a_3,$$  $$a_1+a_2+a_4\neq a_0+a_1+a_2, \quad a_0+a_1+a_4.$$
If $a_2+a_3=a_1+a_4,\, a_1+a_3=a_2+a_4$, then
$$a_1+a_2+a_3=a_1+a_1+a_4, \quad a_1+a_2+a_4=a_1+a_1+a_3.$$
Thus $a_1+a_2+a_3\neq a_0+a_1+a_4,\quad
a_1+a_2+a_4\neq a_0+a_1+a_3.$
Hence $$a_1+a_2+a_3,\,a_1+a_2+a_4\in 3^{\wedge}\mathcal{A}\setminus\{a_0+a_1+a_2,a_0+a_1+a_3,a_0+a_1+a_4\}.$$
If $a_2+a_3=a_1+a_4,\, a_1+a_3\neq a_2+a_4$, then
$$a_1+a_2+a_3,\,a_0+a_2+a_4\in 3^{\wedge}\mathcal{A}\setminus\{a_0+a_1+a_2,a_0+a_1+a_3,a_0+a_1+a_4\}.$$
If $a_2+a_3\neq a_1+a_4,\, a_1+a_3= a_2+a_4$, then
$$a_0+a_2+a_3,\,a_1+a_2+a_4\in 3^{\wedge}\mathcal{A}\setminus\{a_0+a_1+a_2,a_0+a_1+a_3,a_0+a_1+a_4\}.$$
If $a_2+a_3\neq a_1+a_4,\, a_1+a_3\neq a_2+a_4$, then
$$a_0+a_2+a_3,\,a_0+a_2+a_4\in 3^{\wedge}\mathcal{A}\setminus\{a_0+a_1+a_2,a_0+a_1+a_3,a_0+a_1+a_4\}.$$
Hence $|3^{\wedge}\mathcal{A}|\geq 5$.

\end{document}